\documentclass[11pt]{amsart}
\baselineskip=7.0mm
\usepackage[pdftex,colorlinks]{hyperref}
\usepackage{amsmath}
\usepackage{epsfig}
\usepackage{amsthm}
\usepackage{amssymb}
\usepackage{euscript}

\DeclareFontFamily{U}{mathx}{\hyphenchar\font45}
\DeclareFontShape{U}{mathx}{m}{n}{
      <5> <6> <7> <8> <9> <10>
      <10.95> <12> <14.4> <17.28> <20.74> <24.88>
      mathx10
      }{}
\DeclareSymbolFont{mathx}{U}{mathx}{m}{n}
\DeclareMathAccent{\widecheck}    {0}{mathx}{"71}
\numberwithin{equation}{section}
\theoremstyle{plain}

\newtheorem{theorem}{Theorem}[section]
\newtheorem{lemma}{Lemma}[section]

\newtheorem{remark}{Remark} [section]

\usepackage{amsmath, amsthm, amssymb, latexsym, mathrsfs}

\setlength{\textwidth}{5.8in} \setlength{\textheight}{8.0in}
\hoffset=-0.45truein \voffset=0.1truein
\begin{document}
\title[Discrete Hardy-Littlewood-Sobolev Inequality]{Existence of the maximizing pair for the discrete Hardy-Littlewood-Sobolev
inequality}
\author[Huang]{Genggeng Huang}
\address{Department of Mathematics\\
INS and MOE-LSC\\
Shanghai Jiao Tong University, Shanghai} \email{genggenghuang@sjtu.edu.cn}
\author[Li]{Congming Li}
\address{Department of Mathematics\\
INS and MOE-LSC\\
Shanghai Jiao Tong University, Shanghai}\address{Department of Applied Mathematics, University of Colorado at Boulder}
\email{Congming.Li@Colorado.EDU}

\author[Yin]{Ximing Yin }
\address{Department of Mathematics\\
INS and MOE-LSC\\
Shanghai Jiao Tong University, Shanghai}
\email{jasonpkbl@sjtu.edu.cn}
\thanks{The work of C. Li is partially supported by NSF grant DMS-0908097 and NSFC grant 11271166}
\begin{abstract}
In this paper, we study the best constant of the following discrete Hardy-Littlewood-Sobolev inequality, \begin{equation}
\sum_{i,j,i\neq j}\frac{f_{i}g_{j}}{\mid i-j\mid^{n-\alpha}}\leq C_{r,s,\alpha} |f|_{l^r} |g|_{l^s},
\end{equation}where $i,j\in \mathbb Z^n$, $r,s>1$, $0<\alpha<n$, and $\frac 1r+\frac 1s+\frac {n-\alpha}n\geq 2$. Indeed, we can prove that the best constant is attainable in the supercritical case $\frac 1r+\frac 1s+\frac {n-\alpha}n> 2$.
\end{abstract}

\keywords{Discrete Hardy-Littlewood-Sobolev Inequality, best constant, concentration compactness, existence, supercritical}
\maketitle
\section{Introduction}
In the present paper, we investigate the attainability of the best constant of the following discrete Hardy-Littlewood-Sobolev(DHLS for abbreviation) inequality \begin{equation}
\label{DHLS1}\sum_{i,j,i\neq j}\frac{f_{i}g_{j}}{\mid i-j\mid^{n-\alpha}}\leq C_{r,s,\alpha} |f|_{l^r} |g|_{l^s},
\end{equation}where $i,j\in \mathbb Z^n$, $r,s>1$, $0<\alpha<n$, and $\frac 1r+\frac 1s+\frac {n-\alpha}n\geq 2$.
In fact, DHLS inequality is direct related to the classical Hardy-Littlewood-Sobolev(HLS) inequality\begin{equation}
\label{HLS1}\int_{\mathbb{R}^n}\int_{\mathbb{R}^n}\frac{f(x)g(y)}{\mid x-y\mid^{n-\alpha}}dxdy\leq C'_{r,s,\alpha}\| f\|_{L^r}\|g\|_{L^s}
\end{equation} for any $f\in L^r(\mathbb{R}^n)$ and $g\in L^s(\mathbb{R}^n)$ provided that
  \begin{displaymath}
  0<\alpha<n,1<r,s<\infty
  \end{displaymath}
  with
  \begin{displaymath}
  \frac{1}{r}+\frac{1}{s}+\frac{n-\alpha}{n}=2
  \end{displaymath}
  $C'_{r,s,\alpha}$ is the best constant for \eqref{HLS1}.
  \par  We now provide a proof from \eqref{HLS1} to get \eqref{DHLS1}. One may consider the special case of \eqref{HLS1} for $$f(x)\equiv f_i, g(x)\equiv g_i, \text{ in } |x-i|<\frac 1n,\forall i\in \mathbb Z^n,\text{ otherwise } f(x)=g(x)=0.$$
Obviously, we have \begin{eqnarray}&&\int_{\mathbb{R}^n}\int_{\mathbb{R}^n}\frac{|f(x)g(y)|}{\mid x-y\mid^{n-\alpha}}dxdy\nonumber\\&=&\sum_{i,j\in \mathbb Z^n}\int_{B_{\frac 1n}(i)}\int_{B_{\frac 1n}(j)} \frac{|f_i| |g_j|}{|x-y|^{n-\alpha}}dxdy\nonumber\\&>&\sum_{i,j,i\neq j}\frac{|f_i||g_j|}{|i-j|^{n-\alpha}}\int_{B_{\frac 1n}(0)}\int_{B_{\frac 1n}(0)} \frac{1}{|\frac{i-j+x-y}{|i-j|}|^{n-\alpha}}dxdy\nonumber\\
 &\geq &\sum_{i,j,i\neq j}\frac{|f_i||g_j|}{|i-j|^{n-\alpha}}\int_{B_{\frac 1n}(0)}\int_{B_{\frac 1n}(0)} \frac{1}{|1-\frac 1{\sqrt n}|^{n-\alpha}}dxdy
 \nonumber\\&\geq &c_n\sum_{i\neq j}\frac{|f_i||g_j|}{|i-j|^{n-\alpha}}
 \end{eqnarray}    Then by \eqref{HLS1}, we get  \eqref{DHLS1} immediately for $\frac 1r+\frac 1s+\frac {n-\alpha}n=2$. For the supercritical situation, we will present a simple lemma in Section 2 to illustrate it.
 \par It is well-known that  \eqref{HLS1} was studied by a remarkable paper of Lieb \cite{Lieb1983}. In \cite{Lieb1983}, Lieb proved the existence of the maximizing pair $(f,g)$, i.e. the attainability of the best constant of \eqref{HLS1}. In particular, Lieb also gave the explicit $(f,g)$ and $C'_{r,s,\alpha}$ in the case $p=q$. The method Lieb used was to examine the Euler-Lagrange equation that the maximizing pair $(f,g)$ satisfies. Also, we will analysis the Euler-Lagrange equation corresponding to  \eqref{DHLS1}. After Lieb \cite{Lieb1983}, Stein and Weiss first completed Lieb's work for weighted HLS inequality. There are also many other works concerning the Eluer-Lagrange equations corresponding to HLS inequality, see \cite{ChenJinLiLim}-\cite{ChenLiOu2006}.
\par  Now we turn to the discrete situation. For n=1,  \eqref{DHLS1} is just the Hardy-Littlewood-P\'olya (HLP) inequality \cite{Polya}.
In \cite{LiJohn}, the authors considered  \eqref{DHLS1} in a finite form under the assumptions that $r=s=2,\alpha=0$,\begin{equation}
\label{DHLS2}\sum_{i,j=1,i\neq j}^N \frac{f_i g_j}{|i-j|}\leq \lambda_N |f|_{2}|g|_{2}.
\end{equation} As this is a finite summation, \eqref{DHLS2} always holds by H\"older inequality for some constant $\lambda_N$ depending on $N$. From \eqref{DHLS1}, one can see that \eqref{DHLS2} fails for a uniform bound as $N\rightarrow \infty$. They proved that \begin{equation*}
\lambda_N=2\ln N+O(1).
\end{equation*} Recently, Cheng-Li \cite{ChengLi2013} generalized this result to high dimension for $r=s=2,\alpha=0$. They pointed out that the best constant $\lambda_N$ satisfied\begin{equation*}
\lambda_N=|S^{n-1}|\ln N+ o(\ln N),
\end{equation*} here $|S^{n-1}|$ represents the Lebesgue measure of the $n-1$ dimensional unit sphere. The regularities of the maximizing pair $(f,g)$ are also important in analysis. Chen-Li-Zhen \cite{ChenLiZhen2013} use the regularity lifting theorem obtained in \cite{ChenJinLiLim} to get the optimal summation interval of the solution of the Euler-Lagrange equation of  \eqref{DHLS1}. They also get some non-existence results.
 \par In our paper, we have the following theorem.
 \begin{theorem}\label{mainthm1}
 If $r,s>1,\alpha\in (0,n),\frac 1r+\frac 1s+\frac{n-\alpha}n>2$, then the best constant $C_{r,s,\alpha}$ for DHLS inequality \eqref{DHLS1} is attainable.
 \end{theorem}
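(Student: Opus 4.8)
The plan is to run a concentration-compactness argument on a maximizing sequence, using the strict (supercritical) exponent gap in two essential places. First I would reduce to a clean setting: take $(f^k,g^k)$ with $|f^k|_{l^r}=|g^k|_{l^s}=1$ and $J(f^k,g^k):=\sum_{i\neq j}\frac{f^k_i g^k_j}{|i-j|^{n-\alpha}}\to C_{r,s,\alpha}$; replacing entries by absolute values only increases $J$, so I may assume $f^k,g^k\ge 0$. Since $1<r,s<\infty$, the spaces $l^r,l^s$ are uniformly convex, hence reflexive, so after passing to a subsequence $f^k\rightharpoonup f$ and $g^k\rightharpoonup g$. Note $C_{r,s,\alpha}\ge 1>0$ (test with unit masses at two neighboring lattice points). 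Because the lattice has a fixed minimal scale there is no continuous scaling symmetry, so there is no genuine blow-up/concentration; the only obstructions to compactness are \emph{vanishing} (mass spreading thinly to infinity) and \emph{dichotomy} (mass splitting into far-apart lumps). The simultaneous translation invariance $f_i\mapsto f_{i+a},\ g_j\mapsto g_{j+a}$ of both $J$ and the norms is what I will exploit to normalize.

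The heart of the argument, and the step I expect to be the main obstacle, is to show the weak limit is nontrivial after a suitable translation. I claim $\sup_i f^k_i\ge\delta>0$ uniformly, and this is exactly where supercriticality enters. Since $\tfrac1r+\tfrac1s+\tfrac{n-\alpha}n>2$, I can choose $r_1>r$ with $\tfrac1{r_1}+\tfrac1s+\tfrac{n-\alpha}n\ge2$ still admissible: the target interval $\tfrac1{r_1}\in(\max(0,\tfrac1r-\eta),\tfrac1r)$ with $\eta$ the exponent surplus is nonempty, and $r_1>1$ holds automatically as $\tfrac1{r_1}<\tfrac1r<1$. Applying the DHLS inequality for the pair $(r_1,s)$ together with the elementary bound $|f^k|_{l^{r_1}}^{r_1}\le(\sup_i f^k_i)^{r_1-r}|f^k|_{l^r}^{r}$ gives $J(f^k,g^k)\le C_{r_1,s,\alpha}\,(\sup_i f^k_i)^{1-r/r_1}$, so $\sup_i f^k_i\to 0$ would force $J\to 0$, contradicting $J\to C_{r,s,\alpha}>0$. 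Translating the location $a_k$ of the maximum of $f^k$ (attained, since $f^k\in l^r$) to the origin, the translated weak limit $f$ satisfies $f_0=\lim_k f^k_{a_k}\ge\delta>0$, so $f\neq 0$.

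With $f\neq 0$ secured, I would finish by a Brezis--Lieb splitting of the functional. Introduce the operator $(Tf)_j=\sum_{i\neq j}f_i|i-j|^{-(n-\alpha)}$, bounded from $l^r$ to $l^{s'}$ with norm $\le C_{r,s,\alpha}$ by DHLS, so that $J(f,g)=\sum_j g_j(Tf)_j$. Bilinearity gives $J(f^k,g^k)=J(f,g)+J(f,g^k-g)+J(f^k-f,g)+J(f^k-f,g^k-g)$, and the two cross terms vanish, since they pair the fixed vectors $Tf\in l^{s'}$ and $T^*g\in l^{r'}$ against the weakly null sequences $g^k-g$ and $f^k-f$. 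Bounding the last term by DHLS and using the discrete Brezis--Lieb identities $|f^k-f|_{l^r}^r\to 1-a$, $|g^k-g|_{l^s}^s\to 1-b$ with $a=|f|_{l^r}^r\in(0,1]$, $b=|g|_{l^s}^s\in[0,1]$, I obtain
\[
1\le a^{1/r}b^{1/s}+(1-a)^{1/r}(1-b)^{1/s}.
\]
The final ingredient is the strict subadditivity of $(a,b)\mapsto a^{1/r}b^{1/s}+(1-a)^{1/r}(1-b)^{1/s}$: since supercriticality also forces $\tfrac1r+\tfrac1s>1+\tfrac{\alpha}n>1$, picking $p_0\le\tfrac1r$, $q_0\le\tfrac1s$ with $p_0+q_0=1$ and using $x^{1/r}\le x^{p_0}$ on $[0,1]$ followed by weighted AM--GM shows this quantity is $<1$ except at $(0,0)$ and $(1,1)$. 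As $a>0$, the displayed inequality forces $a=b=1$; in particular $g\neq 0$ comes for free. Then $|f^k|_{l^r}\to|f|_{l^r}$ with $f^k\rightharpoonup f$ in the uniformly convex space $l^r$ upgrades weak to strong convergence, and likewise for $g$, whence $J(f^k,g^k)\to J(f,g)=C_{r,s,\alpha}$ with $|f|_{l^r}=|g|_{l^s}=1$: the maximizing pair. The two places where strictness is indispensable—admissibility of some $r_1>r$ to defeat vanishing, and $\tfrac1r+\tfrac1s>1$ to obtain strict subadditivity and defeat dichotomy—are precisely why the critical case $=2$ must be excluded.
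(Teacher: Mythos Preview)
Your argument is correct and follows the same concentration--compactness skeleton as the paper, but the technical implementation is genuinely different. The paper does not work directly with a maximizing sequence for \eqref{DHLS1}; instead it uses the finite truncations \eqref{DHLS3}, whose maximizers $(f^N,g^N)$ satisfy the explicit Euler--Lagrange system \eqref{cc1}, and this system is the workhorse at both stages. For anti-vanishing (Theorem~\ref{mainthm2}) the paper manipulates \eqref{cc1} with an $\epsilon$-split of $g^N$ and applies DHLS to the operator $T$; your interpolation bound $|f^k|_{l^{r_1}}^{r_1}\le(\sup_i f^k_i)^{r_1-r}|f^k|_{l^r}^r$ followed by DHLS at the admissible pair $(r_1,s)$ is a cleaner, E--L--free route to the same conclusion. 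For the ``full norm'' step the paper first shows (Lemma~\ref{lemext1}) that the weak limit itself satisfies the limiting Euler--Lagrange system with constant $C_{r,s,\alpha}$, which immediately gives $|f|_{l^r}^r=|g|_{l^s}^s$ and then derives a contradiction from $J(f/|f|_{l^r},\,g/|g|_{l^s})=C_{r,s,\alpha}|f|_{l^r}^{\,r-1-r/s}>C_{r,s,\alpha}$; you instead use bilinear Brezis--Lieb splitting and the strict subadditivity of $a^{1/r}b^{1/s}+(1-a)^{1/r}(1-b)^{1/s}$. Both arguments hinge on exactly the same two consequences of supercriticality---an exponent slack to kill vanishing, and $\tfrac1r+\tfrac1s>1$ to kill dichotomy---so neither is more general, but yours avoids the Euler--Lagrange machinery and the auxiliary finite problems, at the cost of the paper's approach being slightly more constructive (it produces the maximizer as a limit of explicit finite-dimensional maximizers satisfying \eqref{cc1}).
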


 \begin{remark}
 In fact, the assumptions of  DHLS inequality \eqref{DHLS1} derived from HLS inequality \eqref{HLS1} should be $\frac 1r+\frac 1s+\frac {n-\alpha}n=2$. Later on, we will give a simple lemma to verify that \eqref{DHLS1} still holds for   $\frac 1r+\frac 1s+\frac {n-\alpha}n\geq 2$.
 \end{remark}
 \begin{remark}
 In the above theorem, we only proved the existence of the maximizing pair $(f,g)$ in the supercritical case $\frac 1r+\frac 1s+\frac {n-\alpha}n>2$.  But, we believe it is also valid for the critical case $\frac 1r+\frac 1s+\frac {n-\alpha}n= 2$.
 \end{remark}
 The main idea to prove Theorem \ref{mainthm1} is to consider a sequence of DHLS inequalities with finite elements as follows,\begin{equation}
 \label{DHLS3}\sum_{|i|\leq N}\sum_{|j|\leq N,i\neq j}\frac{f_i g_j}{|i-j|^{n-\alpha}}\leq C_{r,s,\alpha,N}|f|_r|g|_s,
 \end{equation} here $f=(f_i)_{|i|\leq N}$, $r,s>1,\frac 1r+\frac 1s+\frac{n-\alpha}n>2$. It is easy to see that \eqref{DHLS3} is the restriction of \eqref{DHLS1}  on $f,g$ with $f_i,g_i\equiv 0$ for $ |i| > N$. For later use, we denote $J(f,g)$ by $$J(f,g)=\sum_{i\in \mathbb Z^n}\sum_{j\in \mathbb Z^n,i\neq j}\frac{f_i g_j}{|i-j|^{n-\alpha}}.$$ Also we take $f^N, g^N$ with $|f^N|_r=|g^N|_s=1$  satisfy that
 $$J(f^N,g^N)=C_{r,s,\alpha,N}.$$ We want to prove $f^N,g^N\rightarrow f, g$ strongly in $l^r, l^s$ respectively. If it is right, we have proved Theorem \ref{mainthm1}. Unfortunately, this is always false as we can see DHLS inequality \eqref{DHLS1} is invariant under translation. We should use the Concentration Compactness ideas introduced by P.L. Lions. The following theorem is important for using Concentration Compactness ideas.
 \begin{theorem}
 \label{mainthm2} If $r,s>1,\alpha\in (0,n),\frac 1r+\frac 1s+\frac{n-\alpha}n>2$, then$$
 \max_{|i|\leq N} f_i^N,\max_{|i|\leq N} g_i^N\geq c_{r,s,\alpha}>0,$$ here $c_{r,s,\alpha}$ is a uniform constant  independent of $N$.
 \end{theorem}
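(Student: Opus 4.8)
My plan is to avoid the Euler--Lagrange equations and instead extract the lower bound directly from the maximizing property, by interpolating between the prescribed $l^r,l^s$ norms and the $l^\infty$ norm so as to land on a \emph{critical} exponent pair, for which \eqref{DHLS1} is already available. Since the kernel $|i-j|^{-(n-\alpha)}$ is positive, replacing $f^N,g^N$ by $|f^N|,|g^N|$ changes neither the constraints nor the normalization and does not decrease $J$, so I may assume $f^N,g^N\ge0$. Write $M_f=\max_{|i|\le N}f_i^N$ and $M_g=\max_{|i|\le N}g_i^N$; from $|f^N|_r=|g^N|_s=1$ one has $M_f^r\le\sum_i (f_i^N)^r=1$, hence $M_f\le1$, and likewise $M_g\le1$. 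I also record that $C_N:=C_{r,s,\alpha,N}$ is nondecreasing in $N$ (extending an admissible pair for a smaller box by zeros keeps it admissible for a larger box), and that $C_1>0$; therefore $C_N\ge C_1=:c_0>0$ uniformly in $N$.

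Next I set $\delta=\frac1r+\frac1s-\bigl(1+\frac\alpha n\bigr)$, which is strictly positive precisely because we are in the supercritical regime $\frac1r+\frac1s+\frac{n-\alpha}n>2$. I choose $\delta_1,\delta_2>0$ with $\delta_1+\delta_2=\delta$, $\delta_1<\frac1r$ and $\delta_2<\frac1s$ (possible since $\delta<\frac1r+\frac1s$), and define $r_0,s_0$ by $\frac1{r_0}=\frac1r-\delta_1$ and $\frac1{s_0}=\frac1s-\delta_2$. Then $1<r<r_0<\infty$, $1<s<s_0<\infty$, and $\frac1{r_0}+\frac1{s_0}+\frac{n-\alpha}n=2$, so $(r_0,s_0)$ is a critical exponent pair for which the DHLS inequality $J(f,g)\le C_{r_0,s_0,\alpha}|f|_{r_0}|g|_{s_0}$ holds (this is the critical case, obtained from \eqref{HLS1} as in the Introduction).

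Because $r_0>r$, the elementary pointwise bound $(f_i^N)^{r_0}=(f_i^N)^r (f_i^N)^{r_0-r}\le (f_i^N)^r M_f^{\,r_0-r}$, summed over $i$ and combined with $|f^N|_r=1$, gives $|f^N|_{r_0}\le M_f^{\,(r_0-r)/r_0}$; symmetrically $|g^N|_{s_0}\le M_g^{\,(s_0-s)/s_0}$. Inserting these into the critical DHLS inequality and using $J(f^N,g^N)=C_N\ge c_0$ yields $c_0\le C_{r_0,s_0,\alpha}\,M_f^{a}M_g^{b}$, where $a=(r_0-r)/r_0$ and $b=(s_0-s)/s_0$ are both strictly positive thanks to $\delta_1,\delta_2>0$. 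Finally I use $M_f,M_g\le1$ to decouple the two factors: since $M_g^b\le1$, the last inequality forces $M_f^a\ge M_f^aM_g^b\ge c_0/C_{r_0,s_0,\alpha}>0$, hence $M_f\ge (c_0/C_{r_0,s_0,\alpha})^{1/a}$, and symmetrically $M_g\ge (c_0/C_{r_0,s_0,\alpha})^{1/b}$. Taking $c_{r,s,\alpha}$ to be the smaller of these two $N$-independent quantities proves the claim.

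The only genuine point to verify is the existence of a critical pair $(r_0,s_0)$ with $r_0>r$ \emph{and} $s_0>s$ simultaneously (so that both $a,b>0$); this is exactly where the strict inequality in the supercriticality hypothesis enters, and it is what lets me bound both maxima at once rather than merely their product. Everything else is the elementary $l^r$--$l^\infty$ interpolation together with the monotone lower bound on $C_N$. For robustness one could also argue via the Euler--Lagrange relation $C_N (f_i^N)^{r-1}=\sum_{j\ne i}g_j^N/|i-j|^{n-\alpha}$: once $\max g^N$ is bounded below at some site $i_0$, evaluating this identity at a neighbor $i$ of $i_0$ (where $|i-i_0|=1$) transfers the bound to $\max f^N$; but the interpolation argument already delivers both bounds directly.
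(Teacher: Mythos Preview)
Your argument is correct, and it is close in spirit to the paper's but not identical. Both proofs exploit the supercriticality gap to interpolate between the given $l^r,l^s$ norms and $l^\infty$ so as to land on a \emph{critical} exponent configuration, and then invoke the critical DHLS inequality to force a lower bound on the maxima. The difference is in how the interpolation is wired up. The paper works through the Euler--Lagrange system \eqref{cc1}: from $(f_i^N)^{r-1}=\sum_{j\ne i}g_j^N|i-j|^{-(n-\alpha)}$ it writes $g_j^N\le(\max g^N)^\epsilon (g_j^N)^{1-\epsilon}$, takes the $l^{r/(r-1)}$ norm, and applies the critical bound $|T((g^N)^{1-\epsilon})|_{r/(r-1)}\le C|(g^N)^{1-\epsilon}|_{s/(1-\epsilon)}=C$ with $\epsilon=s\bigl(\tfrac1r+\tfrac1s+\tfrac{n-\alpha}{n}-2\bigr)\in(0,1)$; this yields the bound for $\max g^N$ (and symmetrically for $\max f^N$) using the exact optimality of $(f^N,g^N)$ only through the Euler--Lagrange identity. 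You instead bypass the Euler--Lagrange equations entirely: you use only $J(f^N,g^N)=C_N\ge c_0$, split the supercriticality defect $\delta$ between the two variables to reach a critical pair $(r_0,s_0)$ with $r_0>r$ and $s_0>s$, interpolate both factors simultaneously, and then decouple via $M_f,M_g\le1$.

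What each buys: the paper's one-sided interpolation needs the Euler--Lagrange identity (hence exact maximality on the box), but avoids your decoupling step. Your version is more elementary and more robust---it applies to any sequence with $|f^N|_r=|g^N|_s=1$ and $J(f^N,g^N)$ uniformly bounded below, not just to exact finite-box maximizers---and it makes the role of the strict supercriticality completely transparent in the choice of $\delta_1,\delta_2>0$. The verification that such a splitting with $\delta_1<1/r$, $\delta_2<1/s$ is always possible (equivalently that $\delta<1/r+1/s$, which holds since $1+\alpha/n>0$) is the one point worth stating explicitly, and you note it.
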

 Theorem \ref{mainthm2} tells us that after a translation $\bar f^N_i=f^N_{i-i_0}$, we will have $\bar f^N_0=\displaystyle\max_{|i|\leq N} f^N=f^N_{i_0}$.
 This excludes the case $\bar f^N,\bar g^N\rightarrow 0$. We will have after translation, \begin{theorem}
 \label{mainthm3}Let $\bar f^N,\bar g^N$ be the translation of $f^N,g^N$, then  $J(\bar f^N,\bar g^N)\rightarrow C_{r,s,\alpha}$ and $\bar f^N, \bar g^N\rightarrow f, g$ strongly respectively in $l^r,l^s$ as $N\rightarrow \infty$.
 \end{theorem}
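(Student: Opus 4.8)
The plan is to run the concentration--compactness argument of P.-L. Lions on the translated maximizing sequences, using Theorem \ref{mainthm2} to exclude vanishing. First I would record the elementary monotonicity $C_{r,s,\alpha,N}\le C_{r,s,\alpha,N+1}\le C_{r,s,\alpha}$ together with the density of finitely supported sequences in $l^r$ and $l^s$ and the continuity of $J$, which yields $C_{r,s,\alpha,N}\to C_{r,s,\alpha}$. Since $|\bar f^N|_r=|\bar g^N|_s=1$, the sequences are bounded, so after passing to a subsequence I may assume $\bar f^N\rightharpoonup f$ in $l^r$ and $\bar g^N\rightharpoonup g$ in $l^s$; on $\mathbb Z^n$ this weak convergence is equivalent to coordinatewise convergence for bounded sequences. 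Because the translation places the maximum of $f^N$ at the origin, Theorem \ref{mainthm2} gives $\bar f^N_0\ge c_{r,s,\alpha}>0$, hence $f_0\ge c_{r,s,\alpha}>0$ and in particular $f\neq 0$; this is exactly the step that excludes the vanishing scenario and is where the supercritical hypothesis enters.

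Next I would quantify the possible loss of mass at infinity. Writing $\theta=|f|_r^r\in(0,1]$ and $\sigma=|g|_s^s\in[0,1]$, and setting $\rho^N=\bar f^N-f$, $\eta^N=\bar g^N-g$, the Brezis--Lieb lemma in $l^r,l^s$ gives $|\rho^N|_r^r\to 1-\theta$ and $|\eta^N|_s^s\to 1-\sigma$. The crucial point is a Brezis--Lieb type splitting of the bilinear form: expanding $J(\bar f^N,\bar g^N)=J(f,g)+J(f,\eta^N)+J(\rho^N,g)+J(\rho^N,\eta^N)$, the two cross terms tend to $0$. Indeed, \eqref{DHLS1} shows that the operator $T$ defined by $(Tf)_j=\sum_{i\neq j}f_i/|i-j|^{n-\alpha}$ maps $l^r$ boundedly into $l^{s'}$, so $J(f,\eta^N)=\langle Tf,\eta^N\rangle\to 0$ because $Tf$ is a fixed element of $l^{s'}$ while $\eta^N\rightharpoonup 0$ in $l^s$; symmetrically $J(\rho^N,g)\to0$. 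Applying \eqref{DHLS1} to the remaining pieces, $J(f,g)\le C_{r,s,\alpha}\,\theta^{1/r}\sigma^{1/s}$ and $\limsup_N J(\rho^N,\eta^N)\le C_{r,s,\alpha}\,(1-\theta)^{1/r}(1-\sigma)^{1/s}$. Passing to the limit and dividing by $C_{r,s,\alpha}>0$ would yield
\begin{equation}
1\le \theta^{1/r}\sigma^{1/s}+(1-\theta)^{1/r}(1-\sigma)^{1/s}.
\end{equation}

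It then remains to prove the strict subadditivity that closes the argument. With $p=1/r,\ q=1/s$, the supercritical condition forces $p+q=\frac1r+\frac1s>2-\frac{n-\alpha}{n}=1+\frac{\alpha}{n}>1$. Writing $\psi(x,y)=x^{p/(p+q)}y^{q/(p+q)}$, which is concave and positively homogeneous of degree one, hence superadditive, I would set $A=\psi(\theta,\sigma)$ and $B=\psi(1-\theta,1-\sigma)$, so that $A+B\le\psi(1,1)=1$ and the right-hand side above equals $A^{p+q}+B^{p+q}$. Since $A,B\in[0,1]$ and $p+q>1$, we have $A^{p+q}+B^{p+q}\le A+B\le 1$, with total equality only if $A,B\in\{0,1\}$ and $(\theta,\sigma)$ is proportional to $(1-\theta,1-\sigma)$, that is $\theta=\sigma\in\{0,1\}$. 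As $\theta>0$, equality forces $\theta=\sigma=1$, so no mass escapes. Then $|\rho^N|_r\to0$ and $|\eta^N|_s\to0$, i.e. $\bar f^N\to f$ in $l^r$ and $\bar g^N\to g$ in $l^s$ strongly, and consequently $J(\bar f^N,\bar g^N)\to J(f,g)=C_{r,s,\alpha}$, which identifies $(f,g)$ as a maximizing pair and establishes Theorem \ref{mainthm1}.

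The main obstacle I anticipate is the rigorous justification of the bilinear Brezis--Lieb splitting, in particular the vanishing of the cross terms and the correct accounting of the interaction between the part concentrated near the origin and the mass drifting to infinity. This hinges on the mapping property $T:l^r\to l^{s'}$ furnished by \eqref{DHLS1} and on a careful truncation argument showing that the long-range part of the kernel $|i-j|^{-(n-\alpha)}$ contributes negligibly once the supports separate; with those estimates in hand, the elementary convexity lemma of the previous paragraph does the rest.
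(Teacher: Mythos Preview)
Your argument is correct, and for the first assertion (Lemma \ref{lemcc} in the paper) it coincides with the paper's. For the strong convergence, however, you take a genuinely different route. The paper does \emph{not} split the bilinear form $J$ \`a la Brezis--Lieb. Instead it passes to the limit in the finite-$N$ Euler--Lagrange system \eqref{cc1} to show that the weak limit $(f,g)$ satisfies \eqref{ext1} (Lemma \ref{lemext1}); multiplying those equations by $f_i$, $g_j$ and summing forces $|f|_{l^r}^r=|g|_{l^s}^s$ and $J(f,g)=C_{r,s,\alpha}|f|_{l^r}^r$. If this common value were $<1$, normalizing gives $J(\bar f,\bar g)=C_{r,s,\alpha}|f|_{l^r}^{\,r-1-r/s}>C_{r,s,\alpha}$ because $\frac1r+\frac1s>1$ makes the exponent negative (Lemma \ref{lemext2}). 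Strong convergence then follows from the elementary ``pointwise + norm $\Rightarrow$ strong'' fact (Lemma \ref{lemstr}). Your approach avoids the Euler--Lagrange equations entirely, replacing them by the bilinear splitting $J(\bar f^N,\bar g^N)=J(f,g)+J(\rho^N,\eta^N)+o(1)$ and the strict subadditivity of $(\theta,\sigma)\mapsto \theta^{1/r}\sigma^{1/s}+(1-\theta)^{1/r}(1-\sigma)^{1/s}$; this is closer in spirit to Lions' original scheme and is arguably more robust, while the paper's route is shorter once the limit Euler--Lagrange system is in hand and automatically yields the extra information $|f|_{l^r}^r=|g|_{l^s}^s$. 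Both arguments use the supercritical hypothesis at the same decisive point, namely $\frac1r+\frac1s>1$.
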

The present paper is organized as follows. In Section 2, we will prove Theorem \ref{mainthm2} and the first part of Theorem \ref{mainthm3}. This is the main part of this paper and the Concentration Compactness ideas is used in this section. We will prove Theorem \ref{mainthm1} in Section 3 and the last part of Theorem \ref{mainthm3} in Section 4.
\section{Concentration Compactness property}
\label{sec:1}
\setcounter{section}{2} \setcounter{equation}{0}
This section is devoted to prove Theorem \ref{mainthm2}.
First we shall illustrate  \eqref{DHLS1} with the following lemma.
\begin{lemma}
\label{lemcc0}Suppose $a\in l^p(\mathbb Z^n)$, then $|a|_{l^q}\leq |a|_{l^p}$ for $\forall q\geq p$.
\end{lemma}
\begin{proof}
For simplicity, we may assume $|a|_{l^p}=1$ which means $|a_i|\leq 1$, $i\in \mathbb Z^n$. This implies that
$$\sum_{i\in \mathbb Z^n}|a_i|^q\leq \sum_{i\in \mathbb Z^n}|a_i|^p=1.$$ This ends the proof of  the present lemma.
\end{proof}
By Lemma \ref{lemcc0}, one can directly get \eqref{DHLS1} from the critical case $\frac 1r+\frac 1s+\frac{n-\alpha}n=2$.
  \par  A directly computation easily yields  the Euler-Lagrange equation for \eqref{DHLS3}:
   \begin{equation}\begin{cases}\label{cc1}
   r(f_i^N)^{r-1}=\displaystyle \lambda \sum_{j\neq i}\frac{g_j^N}{|i-j|^{n-\alpha}}\\
   s(g_i^N)^{s-1}=\displaystyle \mu \sum_{j\neq i}\frac{f_j^N}{|i-j|^{n-\alpha}}
   \end{cases}\end{equation}
   If we multiply the first equation of \eqref{cc1} by $f_i^N$, the second equation by $g_i^N$ and sum up both sides, we can find out that
   $\frac{r}{\lambda}=\frac{s}{\mu}=C_{r,s,\alpha,N}$. From the definition of $C_{r,s,\alpha,N}$, it is easy to see that $C_{r,s,\alpha,N}>0$ is non-decreasing with respect to $N$. Moreover, we have the following lemma which corresponds to the first part of Theorem \ref{mainthm3}.
\begin{lemma}\label{lemcc}Let $C_{r,s,\alpha}$ and $C_{r,s,\alpha,N}$ be defined as in \eqref{DHLS1} and \eqref{DHLS3} respectively. We have\begin{equation*}
\lim_{N\rightarrow\infty} C_{r,s,\alpha,N}=C_{r,s,\alpha}
\end{equation*}
\end{lemma}
\begin{proof}
It is obvious that $$\lim_{N\rightarrow\infty} C_{r,s,\alpha,N}\leq C_{r,s,\alpha}.$$ Now we choose a maximizing sequence $f^{(m)},g^{(m)}>0$ with $|f^{(m)}|_{l^r}=|g^{(m)}|_{l^s}=1$ such that \begin{equation*}
\sum_{i\in \mathbb Z^n}\sum_{j\in \mathbb Z^n,i\neq j}\frac{f^{(m)}_i g^{(m)}_j}{|i-j|^{n-\alpha}}\geq C_{r,s,\alpha}(1-\frac 1m).
\end{equation*}Then we can choose $N_m$ large enough depending on $m$ such that $$\sum_{i\in \mathbb Z^n}\sum_{j\in \mathbb Z^n,i\neq j}\frac{f^{(m),N_m}_i g^{(m),N_m}_j}{|i-j|^{n-\alpha}}\geq C_{r,s,\alpha}(1-\frac 1m)^2.$$ Here $f^{(m),N_m}_i$ means that \begin{equation*}
f^{(m),N_m}_i=\begin{cases}
f^{(m)}_i, \quad &\text{for } |i|\leq N_m\\
0,&\text{for } |i|> N_m.
\end{cases}
\end{equation*} From the cut-off above, we have $$|f^{(m),N_m}_i|_{l^r}, |g^{(m),N_m}|_{l^s}\leq 1,C_{r,s,\alpha,N_m}\geq C_{r,s,\alpha}(1-\frac 1m)^2.$$ Passing $m\rightarrow \infty$, we get the desired result.
\end{proof}
By Lemma \ref{lemcc}, it is true that $$0<c_0\leq C_{r,s,\alpha,N}\leq C_0<\infty$$ for some uniform constants $c_0,C_0$.  Therefore without loss of generality, we may assume $C_{r,s,\alpha,N}=1$ in the proof of Theorem \ref{mainthm2}, since we only use the uniform up bound and lower bound of $C_{r,s,\alpha,N}$.
\par \textbf{The proof for Theorem \ref{mainthm2}}: Taking the equation of $f^N$ for instance, by \eqref{cc1} we have\begin{equation}\label{cc2}
(f_i^N)^{r-1}=\displaystyle  \sum_{|j|\leq N,j\neq i}\frac{g_j^N}{|i-j|^{n-\alpha}}\leq \max_{|j|\leq N} (g^N_j)^\epsilon  \sum_{|j|\leq N,j\neq i}\frac{(g_j^N)^{1-\epsilon}}{|i-j|^{n-\alpha}}.
\end{equation}Here $0<\epsilon<1$ is a parameter to be determined later.
This means that \begin{equation}\label{cc3}1=\sum_{|i|\leq N}(f_i^N)^{r}\leq \max_{|k|\leq N} (g^N_k)^{\frac{r\epsilon}{1-r}} \sum_{|i|\leq N}\left( \sum_{|j|\leq N,j\neq i}\frac{(g_j^N)^{1-\epsilon}}{|i-j|^{n-\alpha}}\right)^{\frac r{r-1}}\end{equation}
Now we define an operator $T$ satisfying:
\begin{equation*}
(Tf)_i=\sum_{j\in \mathbb Z^n,j\neq i}\frac{f_i}{\mid j-i\mid^{n-\alpha}}.
\end{equation*}
Then by DHLS inequality, we have
$$|Tf|_{l^p}\leq C|f|_{l^q}$$ for $\frac 1q+\frac{n-\alpha}n=1+\frac 1p$. We take $p=\frac r{r-1},q=\frac{s}{1-\epsilon}$, then we can get the righthand side of \eqref{cc3},\begin{eqnarray}\label{cc4}
|T((g^N)^{1-\epsilon})|_{l^{\frac r{r-1}}}\leq C|(g^N)^{1-\epsilon}|_{l^{\frac s{1-\epsilon}}}=C.
\end{eqnarray}To guarantee \eqref{cc4}, we need $$\frac{r-1}{r}+1=\frac{1-\epsilon}s+\frac{n-\alpha}n,\quad \text{i.e.},\quad 2+\frac \epsilon s=\frac 1s+\frac 1r+\frac{n-\alpha}n.$$ By the assumption of Theorem \ref{mainthm2}, we see $\epsilon>0$. Also, as $\frac 1r+\frac{n-\alpha}n<2$, we must have $\epsilon<1$. From \eqref{cc4}, one can get $$\max_{|k|\leq N} (g^N_k)^{\frac{r\epsilon}{1-r}} C^{\frac r{r-1}}\geq 1.$$ Or we have
$$\max_{|k|\leq N} (g^N_k)\geq c_{r,s,\alpha}.$$ The proof for the lower bound of $\displaystyle\max_{|k|\leq N} (f^N_k)$ is just the same, we omit the details here.$\Box$
\begin{remark}
From the proof of Theorem \ref{mainthm2}, one can see that the supercritical condition $\frac 1r+\frac 1s+\frac {n-\alpha}n>2$ plays an important role. This is also the reason why we can't prove  the existence of the maximizing pair $(f,g)$ in the critical case in the present paper.
\end{remark}
\section{The existence of maximizing pair $(f,g)$}
\setcounter{section}{3} \setcounter{equation}{0}
Since Theorem \ref{mainthm2}, we can have \begin{equation*}
f^{(N)}_i=\begin{cases}\bar f_i^N,\quad&\text{in}\quad \Omega_N\\
0,&\text{in}\quad\mathbb Z^n\backslash \Omega_N\end{cases}
\end{equation*} Here $\Omega_N=\{i-i_0||i|\leq N\}$,  $(f^N,g^N)$ is the maximizing pair of \eqref{DHLS3} and $\bar f_i^N=f^N_{i-i_0}$ with $\displaystyle \max_{|i|\leq N} f^N_i=f^N_{i_0}$. One can see $f^{(N)}_0,g^{(N)}_0\geq c>0$.
As $|f^{(N)}|_{l^r}=|g^{(N)}|_{l^s}=1$, we can choose a subsequence still denoted by $f^{(N)},g^{(N)}$ such that$$
f^{(N)}\rightharpoonup f, g^{(N)}\rightharpoonup g, \quad \text{ weakly in}\quad l^r,l^s\quad \text{respectively}$$ and $$f^{(N)}\rightarrow f,g^{(N)}\rightarrow g, \quad \text{pointwise in }\mathbb Z^n.$$ It is easy  to see that $f_0,g_0\geq c>0$ and $|f|_{l^r},|g|_{l^s}\leq 1$. Now we can have the following lemma.
\begin{lemma}\label{lemext1}$\forall i,j\in \mathbb Z^n$, $f_i>0,g_j>0$, we have \begin{equation}\label{ext1}
\begin{cases}
\displaystyle C_{r,s,\alpha}f_i^{r-1}=\sum_{k,k\neq i}\frac{g_k}{|k-i|^{n-\alpha}}\\
\displaystyle C_{r,s,\alpha}g_j^{s-1}=\sum_{k,k\neq i}\frac{f_k}{|k-j|^{n-\alpha}}.
\end{cases}\end{equation}
\end{lemma}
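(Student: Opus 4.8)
The plan is to obtain \eqref{ext1} by passing to the limit $N\to\infty$ in the Euler--Lagrange system \eqref{cc1} satisfied by the finite maximizers. Using the identity $C_{r,s,\alpha,N}=r/\lambda=s/\mu$ recorded just after \eqref{cc1}, equation \eqref{cc1} can be rewritten as $C_{r,s,\alpha,N}(f^N_i)^{r-1}=\sum_{j\neq i}g^N_j|i-j|^{-(n-\alpha)}$, and since this relation is invariant under the lattice translation $i\mapsto i-i_0$, the cut-off translates $f^{(N)},g^{(N)}$ satisfy, for every index in $\Omega_N$,
\begin{equation}\label{planEL}
C_{r,s,\alpha,N}\,(f^{(N)}_i)^{r-1}=\sum_{j\neq i}\frac{g^{(N)}_j}{|i-j|^{n-\alpha}},\qquad C_{r,s,\alpha,N}\,(g^{(N)}_j)^{s-1}=\sum_{k\neq j}\frac{f^{(N)}_k}{|k-j|^{n-\alpha}},
\end{equation}
where the sums may be extended over all of $\mathbb Z^n$ because $f^{(N)},g^{(N)}$ vanish off $\Omega_N$. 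Fixing an index $i$, for $N$ large it lies in $\Omega_N$, so the first identity in \eqref{planEL} holds, and it remains to let $N\to\infty$ on each side.

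First I would dispose of the left-hand side. By Lemma \ref{lemcc} we have $C_{r,s,\alpha,N}\to C_{r,s,\alpha}$, while the pointwise convergence $f^{(N)}_i\to f_i$ together with the continuity of $t\mapsto t^{r-1}$ on $[0,\infty)$ (valid for every $r>1$) gives $(f^{(N)}_i)^{r-1}\to f_i^{r-1}$. Hence the left-hand side tends to $C_{r,s,\alpha}f_i^{r-1}$, which is exactly the left member of the first equation in \eqref{ext1}.

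The hard part will be the right-hand side, namely interchanging the limit $N\to\infty$ with the infinite sum $\sum_{j\neq i}g^{(N)}_j|i-j|^{-(n-\alpha)}$, for which only pointwise convergence and the weak convergence $g^{(N)}\rightharpoonup g$ in $l^s$ are available. The key point is that the kernel $\varphi^{(i)}_j:=|i-j|^{-(n-\alpha)}$ (with $\varphi^{(i)}_i:=0$) lies in the dual space $l^{s'}$, $s'=\frac{s}{s-1}$: indeed $\sum_{j\neq i}|\varphi^{(i)}_j|^{s'}=\sum_{m\in\mathbb Z^n\setminus\{0\}}|m|^{-(n-\alpha)s'}$ converges precisely when $(n-\alpha)s'>n$, i.e. $s<\frac n\alpha$, and the hypotheses force this, since $\frac1r<1$ together with $\frac1r+\frac1s+\frac{n-\alpha}n>2$ yields $\frac1s+\frac{n-\alpha}n>1$, equivalently $\frac1s>\frac\alpha n$. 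With $\varphi^{(i)}\in l^{s'}=(l^s)^*$, the weak convergence $g^{(N)}\rightharpoonup g$ in $l^s$ gives directly $\langle g^{(N)},\varphi^{(i)}\rangle\to\langle g,\varphi^{(i)}\rangle$, that is, the right-hand side converges to $\sum_{j\neq i}g_j|i-j|^{-(n-\alpha)}$. Equivalently, one may split the sum at $|j|\le R$ and $|j|>R$: the tail is bounded by $\|g^{(N)}-g\|_{l^s}\big(\sum_{|j|>R}|i-j|^{-(n-\alpha)s'}\big)^{1/s'}$, which is uniformly small for large $R$ because $\|g^{(N)}\|_{l^s}=1$ and $\|g\|_{l^s}\le1$, while the finite part converges by pointwise convergence. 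Combining the two sides gives the first equation of \eqref{ext1}.

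The second equation of \eqref{ext1} follows by the symmetric argument applied to the second identity in \eqref{planEL}; here the kernel is paired against $f^{(N)}\in l^r$, so one needs $\varphi^{(j)}\in l^{r'}$, i.e. $r<\frac n\alpha$, which holds for the same reason ($\frac1s<1$ and the supercritical relation give $\frac1r+\frac{n-\alpha}n>1$, hence $\frac1r>\frac\alpha n$). The only genuinely delicate step is the tail control of the convolution sum, and it is exactly the integrability $\varphi^{(i)}\in l^{s'}$ (respectively $\varphi^{(j)}\in l^{r'}$), guaranteed by the relation among $r,s,\alpha$, that legitimizes the passage to the limit.
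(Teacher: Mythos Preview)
Your argument is essentially the paper's own: pass to the limit in the translated Euler--Lagrange system, with the tail controlled by the fact that the kernel $k\mapsto|k-i|^{-(n-\alpha)}$ lies in $l^{s'}$ (respectively $l^{r'}$), which is precisely where the supercritical relation $\tfrac1s>\tfrac\alpha n$ (respectively $\tfrac1r>\tfrac\alpha n$) enters; the paper phrases this via a split at radius $M$ plus H\"older, while you invoke weak convergence against a fixed $l^{s'}$ test sequence, and you also note the split argument --- these are the same computation. One point to make explicit: the assertion ``for $N$ large $i$ lies in $\Omega_N$'' is not automatic, since $\Omega_N$ is a ball of radius $N$ whose center $-i_0^{(N)}$ moves with $N$; this is exactly where the hypothesis $f_i>0$ is used (as the paper does): pointwise convergence gives $f^{(N)}_i\to f_i>0$, so $f^{(N)}_i\neq 0$ and hence $i\in\Omega_N$ for all large $N$.
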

\begin{proof}
We only need to show the first part of  \eqref{ext1} is right. As $f_i>0$, we can see $f^{(N)}_i>0$ for $N$ large, then  for any fixed $M$,\begin{equation}\label{ext2}
\displaystyle C_{r,s,\alpha,N}\left(f_i^{(N)}\right)^{r-1}=\sum_{k,k\neq i}\frac{g_k^{(N)}}{|k-i|^{n-\alpha}}=\sum_{|k|\leq M,k\neq i}\frac{g_k^{(N)}}{|k-i|^{n-\alpha}}+\sum_{|k|>M,k\neq i}\frac{g_k^{(N)}}{|k-i|^{n-\alpha}}
\end{equation} We can pass the limit in the left-hand side and the first part of right-hand side of \eqref{ext2} since it is finite summation.
 \begin{eqnarray}
 \sum_{|k|>M,k\neq i}\frac{g_k^{(N)}}{|k-i|^{n-\alpha}}&\leq&  \left(\sum_{|k|>M,k\neq i}\left(g_k^{(N)}\right)^s\right)^{\frac 1s}\left(\sum_{|k|>M,k\neq i} |k-i|^{-\frac{(n-\alpha)s}{s-1}}\right)^{\frac{s-1}s}\nonumber\\
 &\leq & C M^{n-\frac{(n-\alpha)s}{s-1}}\rightarrow 0, \quad \text{as }M  \rightarrow \infty.
  \end{eqnarray}In getting the last inequality, we have used $\frac 1r+\frac 1s+\frac {n-\alpha}n>2$ which means $\frac 1s>\frac\alpha n$.
\end{proof}

\begin{lemma}\label{lemext2}$$|f|_{l^r}=|g|_{l^s}=1.$$\end{lemma}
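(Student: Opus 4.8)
The plan is to test the limiting Euler--Lagrange system \eqref{ext1} against the pair $(f,g)$ itself, and then feed the resulting identity back into the full DHLS inequality \eqref{DHLS1}, exploiting that in the supercritical range the exponent $\frac 1r+\frac 1s$ exceeds $1$. Since Lemma \ref{lemext1} already supplies \eqref{ext1} for the limit with the sharp constant $C_{r,s,\alpha}$, the work here is to convert that system into information about the norms.

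First I would multiply the first equation of \eqref{ext1} by $f_i$ and the second by $g_j$, and sum over all indices. All the $f_i,g_j$ are nonnegative, and every index where $f_i=0$ (resp. $g_j=0$) contributes zero to both sides, so Tonelli's theorem for nonnegative series justifies interchanging the order of summation; the double series converges because $J(f,g)\le C_{r,s,\alpha}|f|_{l^r}|g|_{l^s}<\infty$ by \eqref{DHLS1}. Using the symmetry of the kernel, both sums collapse to $J(f,g)$, giving
\begin{equation*}
C_{r,s,\alpha}|f|_{l^r}^{\,r}=J(f,g)=C_{r,s,\alpha}|g|_{l^s}^{\,s}.
\end{equation*}
In particular $|f|_{l^r}^{\,r}=|g|_{l^s}^{\,s}=:A$. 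From $f_0,g_0\ge c>0$ we get $A\ge c^{\,r}>0$, while weak lower semicontinuity of the norms already forces $A=|f|_{l^r}^{\,r}\le 1$, so $A\in(0,1]$.

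Next I would apply \eqref{DHLS1} to $(f,g)$ directly, obtaining
\begin{equation*}
C_{r,s,\alpha}A=J(f,g)\le C_{r,s,\alpha}|f|_{l^r}|g|_{l^s}=C_{r,s,\alpha}A^{\frac 1r+\frac 1s}.
\end{equation*}
Cancelling the positive constant yields $A\le A^{\beta}$ with $\beta=\frac 1r+\frac 1s$. The supercritical hypothesis $\frac 1r+\frac 1s+\frac{n-\alpha}n>2$ forces $\beta>1+\frac\alpha n>1$; since $A\in(0,1]$ and raising a number in $(0,1]$ to a power larger than $1$ can only decrease it, we also have $A^{\beta}\le A$. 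Hence $A=A^{\beta}$, which for $A\in(0,1]$ and $\beta>1$ holds only when $A=1$, giving $|f|_{l^r}=|g|_{l^s}=1$.

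The step I expect to be most delicate is the rigorous justification that testing \eqref{ext1} against $(f,g)$ reproduces $J(f,g)$ \emph{exactly}, rather than only bounding it from above: this rests on the absolute convergence guaranteed by \eqref{DHLS1} together with the nonnegativity that lets Tonelli apply termwise, and on confirming $A>0$ from the uniform lower bound $f_0,g_0\ge c$ inherited through Theorem \ref{mainthm2}. Once that identity is in hand, the conclusion reduces to the elementary scalar inequality $A\le A^{\beta}$, where it is precisely the sign of $\beta-1$ — i.e. the supercriticality — that pins $A$ down to $1$.
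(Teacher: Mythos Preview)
Your proof is correct and follows essentially the same approach as the paper's: both test the Euler--Lagrange system \eqref{ext1} against $(f,g)$ to obtain $J(f,g)=C_{r,s,\alpha}|f|_{l^r}^{r}=C_{r,s,\alpha}|g|_{l^s}^{s}$, and both then exploit the supercritical condition $\frac1r+\frac1s>1$ together with the DHLS inequality. The only cosmetic difference is that the paper argues by contradiction---normalising to $(\bar f,\bar g)$ and showing $J(\bar f,\bar g)=C_{r,s,\alpha}|f|_{l^r}^{\,r-1-r/s}>C_{r,s,\alpha}$ when $|f|_{l^r}<1$---whereas you reach the equivalent scalar inequality $A\le A^{1/r+1/s}$ directly.
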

\begin{proof}
If it's not true, by Lemma \ref{lemext1}, we can easily see that $0<|f|_{l^r}^r=|g|_{l^s}^s<1$. Set $$\bar f_i=\frac{f_i}{|f|_{l^r}},\bar g_i=\frac{g_i}{|g|_{l^s}}.$$ Then \begin{eqnarray}
J(\bar f,\bar g)&=&\sum_i\sum_{j,j\neq i}\frac{f_i g_j}{|i-j|^{n-\alpha}}|f|_{l^r}^{-1}|g|_{l^s}^{-1}\nonumber\\
&=&C_{r,s,\alpha}|f|_{l^r}^{r-1-\frac rs}>C_{r,s,\alpha}
\end{eqnarray}which is a contradiction to the definition of best constant. The last inequality follows from$$
\frac 1r+\frac 1s>\frac \alpha n+1>1.$$
\end{proof}
In fact, Lemma \ref{lemext2} implies Theorem \ref{mainthm1} with $(f,g)$ as the maximizing pair.
Although in passing the limit to get  the maximizing pair $(f,g)$, we may only have $f_i,g_i>0$ for $i\in \Omega\subset \mathbb Z^n$. But in fact, as we know $(f,g)$ are maximizing pair, they should satisfy \eqref{ext1} for all $i\in \mathbb Z^n$ which means $f,g>0$.

\section{The strong convergence of $\bar f^N,\bar g^N$}
\setcounter{section}{4} \setcounter{equation}{0}
This section is denoted to prove the second part of Theorem \ref{mainthm3}. The following lemma is a special case of Theorem 2 in \cite{BrezisLieb1983}. We provide a simple proof here.
\begin{lemma}\label{lemstr}
Suppose  $f^N\in l^r(\mathbb Z^n)$ and $f^N\rightarrow f\in l^r(\mathbb Z^n)$ pointwise in $\mathbb Z^n$ as $N\rightarrow \infty$.  Then we will have $$ \lim_{N\rightarrow \infty}|f^N-f|_{l^r}=0,$$ provided $\displaystyle \lim_{N\rightarrow \infty}|f^N|_{l^r}=|f|_{l^r}$.
\end{lemma}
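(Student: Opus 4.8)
The plan is to exploit the discrete structure directly rather than invoking the full Brezis--Lieb machinery: I would split $\mathbb Z^n$ into a large finite set, on which pointwise convergence already forces strong convergence, and a tail, whose contribution is controlled by the norm hypothesis. The conceptual point worth emphasizing is that pointwise convergence alone is \emph{not} enough (mass can escape to infinity), and it is precisely the assumption $|f^N|_{l^r}\to|f|_{l^r}$ that rules this out by pinning the tail mass of $f^N$ to that of $f$.

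Concretely, I would first fix $\epsilon>0$ and, using $f\in l^r$, choose a finite set $F\subset\mathbb Z^n$ with $\sum_{i\notin F}|f_i|^r<\epsilon$. Since $F$ is finite and $f^N_i\to f_i$ for each $i$, the finite sums converge term by term, so $\sum_{i\in F}|f^N_i-f_i|^r\to 0$ and also $\sum_{i\in F}|f^N_i|^r\to\sum_{i\in F}|f_i|^r$ as $N\to\infty$. The decisive step is then to estimate the tail of $f^N$: writing $\sum_{i\notin F}|f^N_i|^r=|f^N|_{l^r}^r-\sum_{i\in F}|f^N_i|^r$ and letting $N\to\infty$, the right-hand side tends to $|f|_{l^r}^r-\sum_{i\in F}|f_i|^r=\sum_{i\notin F}|f_i|^r<\epsilon$ by the norm hypothesis. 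Hence $\sum_{i\notin F}|f^N_i|^r<2\epsilon$ for all $N$ large.

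To finish, I would apply the elementary convexity bound $|a-b|^r\le 2^{r-1}(|a|^r+|b|^r)$, valid for $r\ge 1$, on the tail to get $\sum_{i\notin F}|f^N_i-f_i|^r\le 2^{r-1}\bigl(2\epsilon+\epsilon\bigr)$ for $N$ large, while the finite part over $F$ already tends to $0$. Combining the two pieces yields $\limsup_{N\to\infty}|f^N-f|_{l^r}^r\le 3\cdot 2^{r-1}\epsilon$, and since $\epsilon>0$ is arbitrary the limit is $0$, as claimed. I do not anticipate a genuinely hard obstacle here; the only thing one must get right is the role of the norm hypothesis in controlling the escaping tail of $f^N$, which is exactly the content of the second paragraph, and the correct use of the convexity inequality (with $r>1$ from the ambient hypotheses) on that tail.
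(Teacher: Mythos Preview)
Your proof is correct and follows essentially the same route as the paper's: both split $\mathbb Z^n$ into a finite block and a tail, use pointwise convergence on the finite block, invoke the norm hypothesis to show the tail mass of $f^N$ tends to that of $f$, and then bound the tail of $f^N-f$ via $|a-b|^r\le C(|a|^r+|b|^r)$. The only differences are cosmetic (a general finite set $F$ versus a ball $\{|i|\le M\}$, and you track the constant $2^{r-1}$ explicitly).
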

\begin{proof}
For any fixed $\epsilon>0$, we can choose $M$ large enough such \begin{equation}\label{str1}\sum_{|i|\leq M}|f_i|^r\geq |f|_{r}^r(1-\epsilon).\end{equation}Fox such fixed $M$, by the pointwise convergence of $f^N$, we can choose $N_M$ large enough so that \begin{equation}\label{str2}
\sum_{|i|\leq M}|f^N_i-f_i|^r\leq \epsilon|f|_r^r,\quad \forall N\geq N_M.
\end{equation} Combining \eqref{str1} and \eqref{str2}, we have $\displaystyle (1-2\epsilon)|f|_r^r\leq \sum_{|i|\leq M}|f^N_i|^r\leq |f|_r^r$. From $\displaystyle \lim_{N\rightarrow \infty}|f^N|_{l^r}=|f|_{l^r}$, we have another $N_\epsilon$ $$(1-\epsilon)|f|_r^r\leq |f^N|_r^r\leq (1+\epsilon)|f|_r^r, \quad \forall N\geq N_\epsilon. $$ Hence for $N\ge \max(N_M,N_\epsilon)=N_0$,
$$\sum_{|i|> M}|f^N_i|^r\leq 3\epsilon|f|_r^r.$$ Now we have \begin{eqnarray*}
|\bar f^N-f|_r^r&=&\sum_{|i|\leq M}|f^N_i-f_i|^r+\sum_{|i|> M}|f^N_i-f_i|^r\\
&\leq &\epsilon |f|_r^r+C\sum_{|i|> M}(|f^N_i|^r+|f_i|^r)\leq 5\epsilon |f|_r^r
\end{eqnarray*} for $N\geq N_0$. Passing $\epsilon\rightarrow 0$, we have finished the proof of the present lemma.
\end{proof}
The second part of Theorem \ref{mainthm3} is the direct conclusion of Lemma \ref{lemext2} and Lemma \ref{lemstr}.

%



\bigskip




\medskip
\medskip

\end{document}